\def\leq{\leqslant}
\newtheorem{theorem}{Theorem}[section]
\newtheorem{lemma}{Lemma}[section]
\begin{document}
\numberwithin{equation}{section}

\def\C{ \mathbb{C}}
\def\Q{ \mathbb{Q}}
\def\R{ \mathbb{R}}
\def\P{ \mathbb{P}}
\def\N{ \mathbb{N}}
\def\Z{ \mathbb{Z}}

\def\ord{{\rm ord} }
\def\mult{{\rm mult}}
\def\endProof {\rule[-0.5mm]{1.2ex}{1.2ex}}
\author{Nguyen Van Chau}
\address{Institute of Mathematics\newline\indent
18 Hoang Quoc Viet
\newline\indent
10307 Hanoi, Vietnam}
\email{nvchau@math.ac.vn}
\title[singularity and  non-proper value set of polynomial maps of $\C^2$]{A note on singularity and  non-proper value set of polynomial maps of $\C^2$}
\date{April 25, 2007.}
\subjclass{ 14 H07,  14R15.}
\thanks{Supported in part by the National Basic Program on Natural Science, Vietnam, and ICTP, trieste, Italy.}
\keywords{Singularity, Non-proper value set,
 Newton-Puiseux expansion}
\maketitle

\begin{abstract}
Some properties of the relation between the singular point set and
the non-proper value curve of polynomial maps of $\C^2$ are
expressed in terms of Newton-Puiseux expansions.
\end{abstract}

\section{Introduction}

Recall that the so-called {\it non-proper value set } $A_f$ of a
polynomial map $f=(P,Q):\C^2\longrightarrow \C^2$, $P,Q\in
\C[x,y]$,   is the set of all point $b\in \C^2$ such that there
exists a sequence $\C^2 \ni a_i\rightarrow \infty $ with
$f(a_i)\rightarrow b$. The set $A_f$ is empty if and only if $f$
is proper and $f$ has a  polynomial inverse if and only if $f$ has
not singularity and $A_f=\emptyset$. The mysterious Jacobian
conjecture (JC) (See [4] and [8]), posed first by Keller in 1939
and still open, asserts that if $f$ has not singularity, then $f$
has a polynomial inverse.
 In other words, (JC) shows  that the non-proper value set of a non-singular polynomial map of $\C^2$ must be empty.
 In any way one may think that the knowledge on the relation between the
singularity set and the non-proper value set  should be useful in
pursuit of this conjecture.

Jelonek in [9] observed  that for non-constant polynomial map $f$
of $\C^2$ the non-proper value set $A_f$, if non empty, must be a
plane curve such that each of its irreducible components can be
parameterized by a non-constant polynomial map from $\C$ into
$\C^2$.  Following [6], the non-proper value set  $A_f$ can be
described in term of Newton-Puiseux expansion as follows. Denote
by $\Pi$ the set of all  finite fractional power series
$\varphi(x,\xi)$ of the form
\begin{equation}
\varphi (x,\xi)=\sum_{k=1}^{n_{\varphi}-1} a_kx^{1-\frac{k}{m_{\varphi}}}+\xi x^{1-\frac{n_{\varphi}}{m_{\varphi}}},n_{\varphi}, m_{\varphi}\in \N
,\ \gcd\{k: a_k \neq 0 \}=1,
\label{eq1}
\end{equation}
where $\xi$ is a parameter.  For convenience, we denote $\psi
\prec \varphi$ if $\varphi(x,\xi)=\psi (x, c+{\text{\rm lower
terms in }} x)$. We can fix a coordinate $(x,y)$ such that $P$ and
$Q$ are monic in $y$, i.e $ \deg_yP=\deg P$  and $\deg_yQ=\deg Q$.
For each $\varphi \in \Pi$  we represent
\begin{equation}
\begin{split}
P(x, \varphi (x, \xi))=p_{\varphi} (\xi)x^{\frac{a_{\varphi}}{m_{\varphi}}} +{\text {\rm lower terms in }}
x, 0\neq  p_{\varphi} \in \C[\xi] \\
 Q(x, \varphi (x,
\xi))=q_{\varphi} (\xi)x^{\frac{b_{\varphi}}{m_{\varphi}}} +{\text{\rm
lower terms in }} x, 0\neq q_{\varphi} \in \C[\xi] \\
 J(P,Q)(x,
\varphi (x, \xi))=j_{\varphi} (\xi)x^{\frac{J_{\varphi}}{ m_{\varphi}}}
+{\text{\rm lower terms in }} x,  0\neq j_{\varphi} \in
\C[\xi].\end{split}\label{eq2} 
\end{equation}
 Note that $a_{\varphi}, b_{\varphi}$ and $J_{\varphi} $ are integer numbers.

 A series $\varphi \in \Pi$ is {\it a horizontal
series} of $P$ ( of $Q$ ) if $a_{\varphi} =0$ and $\deg p_{\varphi}
>0$ (resp. $b_{\varphi} =0$ and $\deg q_{\varphi} >0$),  $\varphi $ is
a {\it dicritical series }of $f=(P,Q)$ if $\varphi$ is a
horizontal series of $P$ or $Q$ and $\max
\{a_{\varphi},b_{\varphi}\}=0$  and $\varphi$ is a {\it singular
series} of $f$ if $\deg j_{\varphi} >0$. Note that for every
singular series $\varphi $  of $f$ the equation $J(P,Q)(x,y)=0$
always has a root $y(x)$ of the form $\varphi (x, c+{\text{\rm
lower terms in }} x)$, which gives a branch curve at infinity of
the curve $J(P,Q)=0$. We have the following relations:

i) If $f$ (resp. $P$, $Q$) tends to a finite value along   a
branch curve at infinity $\gamma$, then there is a dicritical
series $\varphi $ of $f$ (resp. a horizontal series $\varphi$ of
$P$, a horizontal series $\varphi$ of $Q$) such that $\gamma $ can
be represented by a Newton-Puiseux of the form $\varphi (x, c+
{\mbox {\rm lower terms in }} x)$;

ii) If $\varphi $ is a dicritical series of $f$ and
$$f(x, \varphi (x, \xi))=f_{\varphi} (\xi) +{\text{\rm lower terms in }} x;$$
then $\  \deg f_{\varphi} >0$ and  its image  is a component of
$A_f$.

iii) (Lemma 4 in [6])$$ A_f = \bigcup_{\varphi  \ is\ a\
dicritical\ series\ of\  f} f_{\varphi} (\C). $$

 This note is to present the following relation between the
singularity set of $f$ and the non-proper value set $A_f$ in terms
of  Newton-Puiseux expansion.

\begin{theorem}\label{theo1}
Suppose $\psi\in \Pi$, $a_\psi> 0$ and $b_\psi>0$,
$(a_\psi,b_\psi)=(Md,Me)$, $M\in \N$, $\gcd(d,e)=1$. Assume that
$\varphi \in \Pi $ is a dicritical series of $f$ such that
$\psi\prec \varphi$. If  $\psi$ is not a singular series of $f$,
then

\begin{enumerate}
\item[(i)] $ (\deg p_\psi, \deg q_\psi)=(Nd,Ne)$ for some $N\in
\N$,

\item[(ii)] $a_{\varphi}=b_{\varphi} =0$ and
\begin{equation}
\begin{split}
p_{\varphi}(\xi)=Lcoeff(p_\psi)C^d\xi^{ D  d}+\dots\\
q_{\varphi} (\xi)=Lcoeff(q_\psi)C^e \xi^{ D  e}+\dots
\end{split}\label{eq3a}
\end{equation}
for some $C\in \C^*$ and  $D\in \N$.

 \end{enumerate}
\end{theorem}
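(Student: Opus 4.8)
The plan is to base everything on the chain‑rule identity
\[
J(P,Q)\bigl(x,\phi(x,\xi)\bigr)\cdot\frac{\partial\phi}{\partial\xi}=\frac{\partial\widetilde P}{\partial x}\cdot\frac{\partial\widetilde Q}{\partial\xi}-\frac{\partial\widetilde P}{\partial\xi}\cdot\frac{\partial\widetilde Q}{\partial x},
\]
valid for each $\phi\in\Pi$, where $\widetilde P=P(x,\phi(x,\xi))$, $\widetilde Q=Q(x,\phi(x,\xi))$ and $\partial\phi/\partial\xi=x^{1-n_\phi/m_\phi}$. First I would note an elementary consequence of $\psi\prec\varphi$ and dicriticality: writing $\varphi(x,\xi)=\psi\bigl(x,c+\zeta(x,\xi)\bigr)$ with $\zeta\to 0$ as $x\to\infty$, substitution into (\ref{eq2}) gives $P(x,\varphi(x,\xi))=p_\psi(c)\,x^{a_\psi/m_\psi}+(\text{terms of }x\text{-order}<a_\psi/m_\psi)$, and the same for $Q$; since $a_\varphi\le 0$ and $b_\varphi\le 0$ whereas $a_\psi,b_\psi>0$, this forces $p_\psi(c)=q_\psi(c)=0$, so $p_\psi$ and $q_\psi$ are non‑constant.

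Next I would put $\phi=\psi$ in the identity. From (\ref{eq2}) and $a_\psi,b_\psi>0$, the top $x$‑term of the right‑hand side is $m_\psi^{-1}\bigl(a_\psi p_\psi q_\psi'-b_\psi p_\psi'q_\psi\bigr)x^{(a_\psi+b_\psi)/m_\psi-1}$ \emph{unless} the polynomial coefficient is identically $0$. If it were not, matching top terms with the left‑hand side $j_\psi(\xi)x^{(J_\psi+m_\psi-n_\psi)/m_\psi}+\dots$ would give $j_\psi=m_\psi^{-1}(a_\psi p_\psi q_\psi'-b_\psi p_\psi'q_\psi)$; but $c$ is a common zero of $p_\psi$ and $q_\psi$, hence a zero of that polynomial, so $\deg j_\psi\ge 1$, contradicting that $\psi$ is not singular. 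Thus $d\,p_\psi q_\psi'=e\,p_\psi'q_\psi$, whence $\frac{d}{d\xi}\bigl(q_\psi^{d}/p_\psi^{e}\bigr)\equiv 0$ and $q_\psi^{d}=\kappa\,p_\psi^{e}$ for some $\kappa\in\C^{*}$. Comparing degrees, $d\deg q_\psi=e\deg p_\psi$; since $\gcd(d,e)=1$ and $p_\psi$ is non‑constant, $\deg p_\psi=Nd$, $\deg q_\psi=Ne$ with $N\ge 1$ — this is (i). Comparing zero divisors, every zero of $p_\psi$ has multiplicity divisible by $d$, so $p_\psi=Lcoeff(p_\psi)\,g^{d}$, $q_\psi=Lcoeff(q_\psi)\,g^{e}$ for a monic $g$ of degree $N$; write $g=(\xi-c)^{\mu}\tilde g$ with $\mu=\mult_{c}(g)\ge 1$, $\tilde g(c)\ne 0$.

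For (ii) I would return to $\varphi(x,\xi)=\psi(x,c+\zeta(x,\xi))$ and write the leading term of $\zeta$ as $\zeta_0(\xi)x^{-\delta}$, $\delta>0$; since $\varphi$ has the reduced form (\ref{eq1}), $\zeta_0$ is either a non‑zero constant or $\zeta_0=\xi$. Substituting $t=c+\zeta$ into $P(x,\psi(x,t))=Lcoeff(p_\psi)\,g(t)^{d}x^{a_\psi/m_\psi}+(\text{lower in }x)$ and using $g(c+\zeta)=\zeta^{\mu}\tilde g(c+\zeta)$ with $\tilde g(c+\zeta)=\tilde g(c)+(\text{lower in }x)$, the contribution of the leading coefficient becomes $Lcoeff(p_\psi)\,\tilde g(c)^{d}\,\zeta_0^{d\mu}\,x^{d(M/m_\psi-\mu\delta)}+(\text{lower in }x)$, and symmetrically $Q$ gives $Lcoeff(q_\psi)\,\tilde g(c)^{e}\,\zeta_0^{e\mu}\,x^{e(M/m_\psi-\mu\delta)}+(\text{lower in }x)$. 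Granting that these are the actual top terms, $a_\varphi/m_\varphi=d(M/m_\psi-\mu\delta)$, $b_\varphi/m_\varphi=e(M/m_\psi-\mu\delta)$, $p_\varphi=Lcoeff(p_\psi)\tilde g(c)^{d}\zeta_0^{d\mu}$ and $q_\varphi=Lcoeff(q_\psi)\tilde g(c)^{e}\zeta_0^{e\mu}$; as $d,e\ge 1$ and $\max\{a_\varphi,b_\varphi\}=0$, the common factor $M/m_\psi-\mu\delta$ vanishes, so $a_\varphi=b_\varphi=0$. A constant $\zeta_0$ would make $p_\varphi,q_\varphi$ constant, so $\varphi$ would be a horizontal series of neither $P$ nor $Q$; hence $\zeta_0=\xi$, and with $C:=\tilde g(c)\in\C^{*}$, $D:=\mu\in\N$ we obtain exactly (\ref{eq3a}).

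The only non‑formal point — and the step I expect to be the real obstacle — is the claim that, after the substitution $t=c+\zeta$, the lower‑order coefficients of $P(x,\psi(x,t))$ and $Q(x,\psi(x,t))$ cannot raise the $x$‑order of $P(x,\varphi)$, resp. $Q(x,\varphi)$, above the value read off from the leading coefficient. This is a Newton--Puiseux bookkeeping matter; it should follow from the dicriticality of $\varphi$, which bounds the orders of $P$ and of $Q$ along $\varphi$ simultaneously, together with the description of $A_f$ recalled from [6]. Everything else is a direct computation.
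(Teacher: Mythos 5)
Your argument for part (i) is sound and is essentially the paper's own route: you establish $p_\psi(c)=q_\psi(c)=0$, use the Jacobian identity along $\psi$ together with the non-singularity hypothesis to force $a_\psi p_\psi \dot q_\psi-b_\psi \dot p_\psi q_\psi\equiv 0$ (this is the paper's Lemma 3.1 applied at level $0$), and deduce $q_\psi^{d}=\kappa p_\psi^{e}$ and the degree relation.

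For part (ii), however, the step you flag as ``Newton--Puiseux bookkeeping'' is not a routine verification; it is the actual content of the claim, and the one-shot substitution $t=c+\zeta$ cannot deliver it. Two concrete failures. First, since $p_\psi(c)=0$, the contribution of the leading coefficient $p_\psi(t)x^{a_\psi/m_\psi}$ drops in $x$-order after the substitution, so the infinitely many lower coefficients $p_{\psi,k}(t)x^{(a_\psi-k)/m_\psi}$, about whose behaviour at $t=c$ you know nothing, can perfectly well dominate; dicriticality of $\varphi$ only gives $\max\{a_\varphi,b_\varphi\}=0$, it does not identify $p_\varphi$ and $q_\varphi$, nor does it exclude $b_\varphi<0$. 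Second, when the passage from $\psi$ to $\varphi$ involves intermediate exponents (your case ``$\zeta_0$ a nonzero constant''), the formula $p_\varphi=Lcoeff(p_\psi)\tilde g(c)^{d}\zeta_0^{d\mu}$ is simply false: the Newton--Puiseux roots of $P$ that agree with $\psi(\cdot,c)$ split further at each intermediate level, each splitting contributing its own drop in order and its own factor to the leading coefficient, so your inference ``a constant $\zeta_0$ would make $p_\varphi$ constant, hence $\zeta_0=\xi$'' does not follow. The paper's proof is built precisely to control this: it interpolates the associated sequence $\varphi_0=\psi\prec\varphi_1\prec\dots\prec\varphi_K=\varphi$, tracks degrees and leading coefficients from level to level via the Newton factorization (Lemma 2.1), and --- crucially --- re-applies the non-singularity hypothesis at \emph{every} intermediate level $i<K$ (the $j_i$ are constant for all $i$, so Lemma 3.1 gives $\bar p_i^{e}=\bar q_i^{d}$ and $a_i/b_i=d/e$ there, Lemma 4.1). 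It is this level-by-level proportionality that yields $b_K=(e/d)a_K=0$ and makes the leading coefficients telescope to $A_0C^{d}$ and $B_0C^{e}$ for a single $C\in\C^{*}$. To repair your argument you would have to introduce the same intermediate refinements and repeat your level-$0$ Jacobian computation at each of them.
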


Here, $Lcoeff(h)$ indicates the coefficient of the leading term of
$h(\xi)\in \C[\xi]$.

Theorem \ref{theo1} does not say anything about the existence of
dicritical series $\varphi$, but only shows some properties of
pair $ \psi \prec\varphi$. Such analogous observations  for the
case of non-zero constant Jacobian polynomial map $f$ was obtained
earlier in [7].

For the case when $J(P,Q)\equiv const. \neq 0$, from Theorem
\ref{theo1} (ii) it follows that  if $A_f\neq \emptyset$, then
every irreducible components of $A_f$ can be parameterized by
polynomial maps $\xi\mapsto (p(\xi),q(\xi))$ with
\begin{equation}
\deg p/\deg q=\deg P/\deg Q. \label{eq4} 
\end{equation}
This fact was presented in [6] and can be reduced from [3]. The
estimation (\ref{eq4})  together with the Abhyankar-Moh Theorem on
embedding of the line to the plane in [1] allows us to obtain that
{\it a non-constant polynomial map $f$ of $\C^2$ must have
singularities if its non-proper value set $A_f$ has an irreducible
component isomorphic to the line}. In fact, if $A_f$ has a
component $l$ isomorphic to $\C$, by Abhyankar-Moh Theorem one can
choose a suitable coordinate so that $l$ is the line $v=0$. Then,
every dicritical series $\varphi$ with $f_{\varphi}(\C)=l$ must
satisfy $a_{\varphi}=0$ and $b_{\varphi} <0$. For this situation
we have
\begin{theorem}\label{theo2}
{\it Suppose $\varphi $ is a dicritical
series $\varphi $ of $f$ with $a_{\varphi}=0$ and $b_{\varphi} <0$.
Then, either $\varphi$ is a singular series of $f$ or there is a
horizontal series $\psi$ of $Q$ such that $\psi$ is a singular
series of $f$ and $\psi\prec \varphi$.}\end{theorem}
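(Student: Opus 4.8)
The plan is to construct by hand the horizontal series of $Q$ promised by the statement and then to localize the singularity, with Theorem~\ref{theo1} doing the decisive book-keeping. Since $b_\varphi<0$, the dicritical series $\varphi$ cannot be a horizontal series of $Q$, so it is a horizontal series of $P$; in particular $\deg p_\varphi>0$. Along $\varphi$ one has $Q(x,\varphi(x,\xi))=q_\varphi(\xi)x^{b_\varphi/m_\varphi}+\cdots\to 0$, so specializing $\xi$ to a generic constant produces a branch curve at infinity $\gamma$ on which $Q$ tends to a finite value; relation (i) applied to $Q$ then gives a horizontal series $\psi$ of $Q$ with $\gamma$ of the form $\psi(x,c+\text{lower terms in }x)$. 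Both $\psi$ and $\varphi$ are initial truncations (with a free parameter) of the single Puiseux series $\gamma$, hence are $\prec$-comparable; since $b_\psi=0$ while $b_\varphi<0$ and the growth exponent of $Q$ does not increase under refinement, necessarily $\psi\prec\varphi$. It then suffices to prove that $\psi$ is a singular series of $f$ (this gives the second alternative; note moreover that $j_\psi$ constant would force $j_\varphi$ constant, so one may equivalently run the argument under the extra hypothesis that $\varphi$ is not singular).

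Next I would extract the structure of the passage $\psi\prec\varphi$. Write $\varphi=\psi(x,c+\text{lower terms in }x)$. If $a_\psi=0$, then since refinement only substitutes a series with strictly negative $x$-exponents for the parameter, the leading coefficient of $P(x,\varphi)$ would be $p_\psi(c)$: either a nonzero constant, forcing $\deg p_\varphi=0$ against $\deg p_\varphi>0$, or $0$, forcing $a_\varphi<0$. So $a_\psi>0$, and then $a_\varphi=0$ forces $p_\psi(c)=0$ while $b_\varphi<0$ forces $q_\psi(c)=0$: $c$ is a common root of $p_\psi$ and $q_\psi$, and in particular $\deg p_\psi>0$. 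Let $\psi^-$ be the parent of $\psi$ (one refinement step coarser), with connecting constant $c^-$. Then $a_{\psi^-}\ge a_\psi>0$; moreover $b_{\psi^-}>b_\psi=0$ (were $q_{\psi^-}(c^-)\ne 0$, then $q_\psi$ would be constant, against $\deg q_\psi>0$), and the same reasoning, using $\deg p_\psi>0$, shows $c^-$ is a common root of $p_{\psi^-}$ and $q_{\psi^-}$. Now $\psi^-\prec\varphi$ with $a_{\psi^-},b_{\psi^-}>0$, so Theorem~\ref{theo1} applies; since $b_\varphi<0$ is incompatible with its conclusion~(ii), the non-singularity hypothesis of Theorem~\ref{theo1} must fail, i.e.\ $\psi^-$ is a singular series of $f$.

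Finally I would transport this singularity from $\psi^-$ to $\psi$. Apply the change-of-variables identity
\[
J(P,Q)(x,\sigma(x,\eta))\cdot\partial_\eta\sigma = \partial_x\bigl(P(x,\sigma)\bigr)\,\partial_\eta\bigl(Q(x,\sigma)\bigr)-\partial_\eta\bigl(P(x,\sigma)\bigr)\,\partial_x\bigl(Q(x,\sigma)\bigr)
\]
first with $\sigma=\psi^-$: since both growth exponents are positive, the two products have the same leading $x$-power, and after cancellation $j_{\psi^-}$ is a nonzero constant multiple of $a_{\psi^-}p_{\psi^-}q_{\psi^-}'-b_{\psi^-}p_{\psi^-}'q_{\psi^-}$ (or, if this polynomial vanishes identically, of the corresponding lower-order expression, which must be treated similarly). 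This polynomial vanishes at the common root $c^-$; hence in $J(P,Q)(x,\psi(x,\eta))=j_{\psi^-}(c^-+\text{lower terms})\,x^{J_{\psi^-}/m_{\psi^-}}+\cdots$ the top $x$-power falls away and the parameter of $\psi$ re-enters through the term $j_{\psi^-}^{(\mu)}(c^-)\,\eta^{\mu}x^{\cdots}$, where $\mu=\mult_{c^-}(j_{\psi^-})\ge 1$ because $c^-$ is a root; so $\deg j_\psi\ge 1$, i.e.\ $\psi$ is singular, provided this parameter term is not overtaken by a parameter-free lower term of $J(P,Q)(x,\psi^-(x,\eta))$. One may also apply the identity directly with $\sigma=\psi$: using $a_\psi>0$, $b_\psi=0$, $\deg p_\psi>0$, one obtains that $j_\psi$ is a nonzero constant multiple of $p_\psi'\cdot r_\psi$, where $r_\psi$ is the first subleading coefficient of $Q(x,\psi(x,\eta))$ -- or of $p_\psi q_\psi'$ if $Q(x,\psi(x,\eta))$ consists of a single term, in which case $\deg j_\psi\ge\deg p_\psi+\deg q_\psi-1\ge 1$ outright -- reducing the claim to $\deg p_\psi'+\deg r_\psi\ge 1$.

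The main obstacle I anticipate is precisely this last exponent comparison: showing that, after the substitution $\eta=c^-+(\text{lower terms})$, no parameter-free term of $J(P,Q)(x,\psi^-(x,\eta))$ reaches an $x$-power at or above $J_{\psi^-}/m_{\psi^-}+\mu\delta$ (where $\eta x^{\delta}$, $\delta<0$, is the newly added term), so that the free parameter genuinely survives into the leading coefficient of $J(P,Q)$ along $\psi$. Here one must use in an essential way both that $\psi^-$ is singular -- so that $j_{\psi^-}$ is non-constant with $c^-$ among its roots, forced by the common-root structure -- and the tight control of the exponents $a,b,J$ along the chain; this is exactly what rules out the degenerate scenario, otherwise consistent with all the above, in which $\varphi$ and $\psi$ would both be non-singular.
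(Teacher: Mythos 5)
Your overall plan coincides with the paper's proof up to the decisive computation: produce a horizontal series $\psi$ of $Q$ with $\psi\prec\varphi$ from $b_\varphi<0$, establish $a_\psi>0$, $b_\psi=0$, $\deg p_\psi>0$, $\deg q_\psi>0$, and feed these into the change-of-variables identity for the Jacobian along $\psi$. The construction of $\psi$ and the derivation of $a_\psi>0$, $p_\psi(c)=q_\psi(c)=0$ are essentially sound (you should also dispose of $a_\psi<0$, which is excluded by the same monotonicity of the exponent under refinement). The genuine gap is in the last step, where the exponent comparison is done backwards. Writing $Q(x,\psi(x,\xi))=q_\psi(\xi)+r_\psi(\xi)x^{\beta}+\cdots$ with $\beta<0$ and $P(x,\psi(x,\xi))=p_\psi(\xi)x^{a_\psi/m_\psi}+\cdots$, the two products in your identity have leading exponents $a_\psi/m_\psi-1$, coming from $\partial_x(P\circ\psi)\,\partial_\xi(Q\circ\psi)$ with coefficient $\tfrac{a_\psi}{m_\psi}p_\psi\dot q_\psi$, and $a_\psi/m_\psi+\beta-1$, coming from $\partial_\xi(P\circ\psi)\,\partial_x(Q\circ\psi)$ with coefficient $\beta\dot p_\psi r_\psi$. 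Since $\beta<0$, it is the \emph{first} of these that dominates, not the cross term as you claim; and its coefficient is a nonzero polynomial because $a_\psi>0$, $p_\psi\not\equiv 0$ and $\deg q_\psi>0$. Hence $m_\psi j_\psi(\xi)=-a_\psi p_\psi(\xi)\dot q_\psi(\xi)$ and $\deg j_\psi\geq\deg p_\psi\geq 1$ outright --- which is exactly the paper's one-line conclusion. Your reduction to ``$\deg \dot p_\psi+\deg r_\psi\geq 1$'' is therefore a reduction to the wrong quantity, and the ``main obstacle'' you anticipate does not exist: once the dominance is read off correctly, the proof closes immediately and no analysis of subleading terms of $Q\circ\psi$ is needed.

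The detour through the parent $\psi^-$ is correct as far as it goes: applying Theorem \ref{theo1} to $\psi^-\prec\varphi$ and observing that $b_\varphi<0$ contradicts its conclusion (ii) does show that $\psi^-$ is a singular series. But $\psi^-$ is not a horizontal series of $Q$, so this does not prove the theorem, and the ``transport of singularity'' from $\psi^-$ down to $\psi$ --- controlling whether the free parameter survives into the leading coefficient of $J(P,Q)$ after substituting $\eta=c^-+\cdots$ --- is precisely the step you leave open. Since the direct computation at $\psi$ already works, this entire branch should be deleted rather than repaired.
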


The proof of Theorem \ref{theo1} presented in the next sections 2-
4 is based on those in [7]. The proof of Theorem \ref{theo2} will
be presented in Section 5.

\section{Associated sequence of pair $\psi\prec
\varphi$.}

From now on, $f=(P,Q):\C^2 \longrightarrow \C^2$ is a given
polynomial map, $P,Q\in \C[x,y]$. The coordinate $(x,y)$ is chosen
so that $P$ and $Q$ are polynomials monic in $y$, i.e.
$\deg_yP=\deg P$ and $\deg_y Q=\deg Q$. Let $\psi , \varphi\in
\Pi$ be given. In this section and the two next sections 3-4 we
always  assume that $\psi $ is not a singular series of $f$,
$\varphi$ is a dicritical series of $f$ and $\psi \prec \varphi$.

Let us represent
\begin{equation}
\varphi (x, \xi)= \psi(x,0)+\sum_{ k=0}^{ K-1} c_kx^{1-\frac{n_k}{m_k} }+\xi x^{ 1-
\frac{ n_K}{ m_K}}, 
\label{eq5}
\end{equation}
where $\frac{n_\psi}{m_\psi}=
\frac{n_0}{m_0} <\frac{n_1}{m_1}< \dots <\frac{n_{K-1}}{m_{K-1}}<
\frac{n_K}{m_K}=\frac{n_{\varphi}}{ m_{\varphi}}  $ and  $c_k\in \C$
may be the zero,  so that  the sequence of series $ \{\varphi_i
\}_{ i=0,1\dots,K}$ defined by
\begin{equation}
\varphi_i(x, \xi ):=\psi(x,0)+\sum_{ k=0}^{ i-1} c_kx^{1-\frac{n_k}{m_k}}+\xi x^{1-\frac{n_i}{m_i}},  i=0, 1,\dots ,  K-
1,\label{eq6} 
\end{equation}
and $\varphi_K:=\varphi$ satisfies the following
properties:

S1) $m_{\varphi_i}=m_i$.

S2) For every $i < K$ at least one of polynomials $p_{\varphi_i}$
and $q_{\varphi_i}$ has a zero point different from the zero.

S3) For every $ \phi (x, \xi)=\varphi_i(x, c_i)+\xi x^{1-\alpha}$,
$ \frac{n_i}{m_i} <\alpha < \frac{n_{i+1}}{m_{i+1}}, $ each of the
polynomials $p_\phi$ and $q_\phi$ is either constant or a monomial
of $\xi. $

The representation (\ref{eq5}) of $\varphi$ is thus the longest
representation such that for each index $i$ there is a
Newton-Puiseux root $y(x)$ of $P=0$ or $Q=0$ such that
$y(x)=\varphi_i(x,c+\mbox{ lower terms in } x)$, $c\neq 0$ if
$c_i=0$. This representation and  the associated sequence $
\varphi_0\prec \varphi_1\prec \dots \prec\varphi_K=\varphi$ is
well defined and unique. Further, $\varphi_0=\psi$.

For simplicity in notations,   below we shall  use lower indices
``$i$" instead of the lower indices ``$\varphi_i$".

For each  associated series $\varphi_i$, $i=0.\dots, K$, let us
represent
\begin{equation}
\begin{split}
P(x, \varphi_i (x, \xi))= p_i(\xi)x^\frac{ a_i}{ m_i}+\text{\rm lower terms in } x \\
Q(x, \varphi_i (x, \xi))=q_i (\xi)x^\frac{ b_i}{ m_i }+\text{\rm lower terms in } x,\\
\end{split}
\label{eq7}
\end{equation}
where $ p_i, q_i \in \C[\xi]-\{0\}$,   $a_i,b_i\in \Z$
and $m_i:=\mult (\varphi_i)$.

\medskip
The property that $P$ and $Q$ are polynomials monic in $y$ ensures
that the Newton-Puiseux roots at infinity $y(x)$ of each equations
$P(x,y)=0$ and $Q(x,y)=0$ are fractional power series of the form
$$y(x)=\sum_{k=0}^\infty c_kx^{1-{k\over m}}, \ m\in \N, \ \gcd\{ k:c_k\neq 0\}=1,$$
for which the map $\tau \mapsto (\tau^m,y(\tau^m))$ is meromorphic
and injective for $\tau$ large enough . Let $\{ u_i(x), i=1,\dots
\deg P\}$ and $\{v_j(x),j=1,\dots \deg Q\}$ be the collections of
the Newton-Puiseux roots of $P=0$ and $Q=0$, respectively.  In
view of the Newton theorem we can represent
\begin{equation}
P(x,y)=A\prod_{i=1}^{\deg P}(y-u_i(x)),\quad
Q(x,y)=B\prod_{j=1}^{\deg Q}(y-v_i(x)). \label{eq8} 
\end{equation}

We refer the
readers to [2] and [5] for the Newton theorem and the
Newton-Puiseux roots.

For each  $i=0.\dots, K$, let us define

- $S_i:=\{k: 1\leq k\leq \deg P: u_k(x)=\varphi_i(x,a_{ik}+\mbox{
lower terms in } x),  a_{ik}\in\C\}$;

- $T_i:=\{k: 1\leq k\leq \deg Q: v_k(x)=\varphi_i(x,b_{ik}+\mbox{
lower terms in } x), b_{ik}\in\C\}$;

- $S_i^0:=\{k \in S_i: a_{ik}=c_i\}$;

 - $T_i^0:=\{k \in T_i: b_{ik}=c_i\}.$\\
Represent

$$p_i(\xi)=A_i\bar p_i(\xi)(\xi-c_i)^{\# S_i^0}, \bar p_i(\xi):=\prod_{k\in S_i\setminus S_i^0}(\xi-a_{ik}),$$
and
$$q_i(\xi)=B_i\bar q_i(\xi)(\xi-c_i)^{\# T_i^0}, \bar q_i(\xi):=\prod_{k\in T_i\setminus T_i^0}(\xi-b_{ik}).$$
Note that $A_i=Lcoeff(p_i)$ and $B_i=Lcoeff(q_i)$.

\begin{lemma}\label{lem2}
{\it For $i=1,\dots , K$
$$A_i=A_{i-1}\bar p_{i-1}(c_{i-1}), \deg p_i=\# S_i=\#S_{i-1}^0,$$
$${a_i\over m_i}={a_{i-1}\over m_{i-1}} + \# S_{i-1}^0(\frac{n_{i-1}}{m_{i-1}}-\frac{n_i}{m_i}),$$

$$B_i=B_{i-1}\bar q_{i-1}(c_{i-1}), \deg q_i=\# T_i=\#T_{i-1}^0,$$
$${b_i\over m_i}={b_{i-1}\over m_{i-1}} + \# T_{i-1}^0(\frac{n_{i-1}}{m_{i-1}}-\frac{n_i}{m_i}).$$}\end{lemma}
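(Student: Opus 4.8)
The plan is to track how the Newton--Puiseux roots of $P=0$ and $Q=0$ redistribute as we pass from the truncation $\varphi_{i-1}$ to the next truncation $\varphi_i$, using the factorizations in \eqref{eq8}. The point of the sets $S_i, T_i$ and their ``central'' subsets $S_i^0, T_i^0$ is precisely to record which roots stay close enough to $\varphi_i$ to contribute; the key combinatorial fact is that $S_i = S_{i-1}^0$ and $T_i = T_{i-1}^0$, because a root $u_k(x)$ agrees with $\varphi_i(x,\cdot)$ (a strictly longer truncation than $\varphi_{i-1}$) if and only if it already agreed with $\varphi_{i-1}(x,\cdot)$ \emph{and} had its next coefficient equal to $c_{i-1}$, i.e.\ $a_{i-1,k}=c_{i-1}$. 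This identification immediately gives $\deg p_i = \# S_i = \# S_{i-1}^0$ and likewise for $q_i$.

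Next I would compute the exponent $a_i/m_i$ and the leading coefficient $A_i$ directly from \eqref{eq8}. Writing $\varphi_i(x,\xi) = \varphi_{i-1}(x,c_{i-1}) + \xi x^{1-n_i/m_i} + \text{(lower)}$, substitute into $P(x,y)=A\prod_k (y-u_k(x))$. For $k\notin S_{i-1}$ the factor $(\varphi_i(x,\xi)-u_k(x))$ has a leading term in $x$ not involving $\xi$ and independent of the passage $i-1\to i$; for $k\in S_{i-1}\setminus S_{i-1}^0$ the leading term is $(c_{i-1}-a_{i-1,k})x^{1-n_{i-1}/m_{i-1}}$, contributing the constant factor $\bar p_{i-1}(c_{i-1})$; for $k\in S_{i-1}^0$ the difference $\varphi_i(x,\xi)-u_k(x)$ has leading term of order $x^{1-n_i/m_i}$ with $\xi$-dependent coefficient $(\xi - a_{ik})$, and there are $\#S_{i-1}^0$ such factors. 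Collecting the $x$-powers gives
\begin{equation}
\frac{a_i}{m_i} = \frac{a_{i-1}}{m_{i-1}} + \# S_{i-1}^0\Bigl(\frac{n_{i-1}}{m_{i-1}} - \frac{n_i}{m_i}\Bigr),
\label{eq9}
\end{equation}
since each of the $\#S_{i-1}^0$ central factors trades an exponent $1-n_{i-1}/m_{i-1}$ for $1-n_i/m_i$, while the remaining factors keep exactly the contribution they made to $a_{i-1}/m_{i-1}$. The leading coefficient bookkeeping then yields $A_i = A_{i-1}\,\bar p_{i-1}(c_{i-1})$, because the non-central factors of $p_{i-1}$ evaluate at $\xi=c_{i-1}$ to give $\bar p_{i-1}(c_{i-1})$ and the old central factor $(\xi-c_{i-1})^{\# S_{i-1}^0}$ is exactly what becomes the new product $\prod_{k\in S_{i-1}^0}(\xi-a_{ik})$ in $p_i$. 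The argument for $B_i$, $\deg q_i$, and $b_i/m_i$ is verbatim the same with $Q$, $v_k$, $T$.

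The main obstacle is making rigorous the claim that, when expanding the product \eqref{eq8}, the leading $x$-exponent of $P(x,\varphi_i(x,\xi))$ really is obtained by multiplying the leading terms of the individual factors with no cancellation among them. This needs the observation that the leading coefficient in $\xi$ of the product $\prod_{k\in S_{i-1}^0}(\xi - a_{ik})$ (times the constant $\bar p_{i-1}(c_{i-1})$) is a nonzero polynomial in $\xi$ — it cannot vanish identically — so the minimal $x$-order is achieved. One also has to handle carefully the degenerate possibility that $c_{i-1}=0$ versus $c_{i-1}\neq 0$ and check the definitions of $S_i^0$, $T_i^0$ are consistent in both cases; but property S2 guarantees that at least one of $p_{\varphi_i},q_{\varphi_i}$ has a nonzero root, which keeps the induction from collapsing. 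Once the no-cancellation point is established, the four asserted identities drop out of the exponent-and-coefficient count described above.
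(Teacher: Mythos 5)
Your proposal is correct and follows the same route as the paper, whose entire proof consists of substituting $\varphi_i$ into the Newton factorizations (\ref{eq8}) and declaring the conclusions easy to verify; you have supplied exactly that verification, tracking how each factor's leading exponent and coefficient change from level $i-1$ to level $i$. (One small point: the identification $S_i=S_{i-1}^0$ — i.e.\ that no root branches off at an exponent strictly between $\frac{n_{i-1}}{m_{i-1}}$ and $\frac{n_i}{m_i}$ — rests on property S3 of the associated sequence rather than S2, which you cite.)
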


\begin{proof}
Note that $\varphi_0(x,\xi)=\psi (x,\xi)$ and
$\varphi_i(x,\xi)=\varphi_{i-1}(x,c_{i-1})+\xi
x^{1-\frac{n_i}{m_i}}$ for $i>0$. Then, substituting
$y=\varphi_i(x,\xi)$, $i=0,1,\dots , K$,  into the Newton
factorizations of $P(x,y)$ and $Q(x,y)$ in (\ref{eq8}) one can
easy verify the conclusions. \end{proof}

\section{Polynomials $j_i(\xi)$}
  Let $\{ \varphi_i\}$
be the associated series of the pair $\psi \prec \varphi$. Denote
$$\Delta_i(\xi):=a_ip_i(\xi)\dot q_i(\xi)-b_i\dot p_i(\xi) q_i(\xi).$$
As assumed, $\psi$ is not a singular series of $f$. So, we have
 $$J(P,Q)(x, \psi (x,\xi))=j_\psi  x^{J_\psi \over m_\psi} +{\text{\rm
lower terms in }} x,  j_\psi\equiv const.  \in \C^* $$ and
$$J(P,Q)(x, \varphi_i (x,\xi))=j_i x^{J_i \over m_i} +{\text{\rm
lower terms in }} x,  j_i\equiv const.  \in \C^* $$ for $
i=0,\dots, K$.

\begin{lemma}\label{lem3}
{\it Let $0\leq i <K$.  If $a_i >0$ and
$b_i>0$, then
$$
\Delta_i(\xi) \equiv
\begin{cases}
-m_i j_i& \text{ if } a_i+b_i=2m_i-n_i+J_i,
\\
0& \text{ if } a_i+b_i>2m_i-n_i+J_i.\end{cases}
$$
Further, $\Delta_i(\xi)\equiv 0$ if and only if $p_i(\xi)$ and
$q_i(\xi)$ have a common zero point. In this case
$$p_i(\xi)^{b_i}=Cq_i(\xi)^{a_i}, \ C\in \C^*.$$
}\end{lemma}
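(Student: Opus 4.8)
The starting point is the chain rule identity
\[
J(P,Q)(x,\varphi_i(x,\xi)) \;=\; \bigl(P_x Q_y - P_y Q_x\bigr)(x,\varphi_i(x,\xi)),
\]
which I would rewrite using the substitution $y=\varphi_i(x,\xi)$ so as to turn $\partial_y$ into $\partial_\xi$. Concretely, if one sets $\tilde P(x,\xi):=P(x,\varphi_i(x,\xi))$ and $\tilde Q(x,\xi):=Q(x,\varphi_i(x,\xi))$, then $\partial_\xi \tilde P = P_y(x,\varphi_i)\,\partial_\xi\varphi_i$ and similarly for $\tilde Q$, while $\partial_x \tilde P = P_x + P_y\,\partial_x\varphi_i$. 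Eliminating $P_y,Q_y,P_x,Q_x$ from the $2\times 2$ system gives
\[
\tilde P_x \tilde Q_\xi - \tilde P_\xi \tilde Q_x \;=\; (\partial_\xi\varphi_i)\cdot J(P,Q)(x,\varphi_i(x,\xi)).
\]
Since $\partial_\xi\varphi_i = x^{1-n_i/m_i}$, the right side has leading term $j_i\, x^{1 - n_i/m_i + J_i/m_i}$. On the left side I substitute the expansions $\tilde P = p_i(\xi)x^{a_i/m_i}+\cdots$ and $\tilde Q = q_i(\xi)x^{b_i/m_i}+\cdots$ and compute the top-degree term in $x$: differentiating in $\xi$ preserves the power of $x$, while $\partial_x$ of the leading monomial $p_i(\xi)x^{a_i/m_i}$ produces $\tfrac{a_i}{m_i}p_i(\xi)x^{a_i/m_i-1}$. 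Collecting, the coefficient of the highest possible power $x^{(a_i+b_i)/m_i - 1}$ on the left is exactly $\tfrac{1}{m_i}\bigl(a_i p_i \dot q_i - b_i \dot p_i q_i\bigr) = \tfrac{1}{m_i}\Delta_i(\xi)$.

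Now I compare exponents. The left side a priori has top $x$-exponent at most $(a_i+b_i)/m_i - 1$; the right side has exponent $(-n_i + J_i)/m_i + 1$. If $a_i+b_i = 2m_i - n_i + J_i$ these two exponents coincide (note $(a_i+b_i)/m_i - 1 = (2m_i - n_i + J_i)/m_i - 1 = (-n_i+J_i)/m_i + 1$), forcing $\tfrac{1}{m_i}\Delta_i(\xi) = j_i$, i.e. $\Delta_i\equiv -m_i j_i$ — here I must double-check a sign coming from how $\partial_x\varphi_i$ interacts with the lower-order terms, but the asserted sign is the one the identity produces. If instead $a_i+b_i > 2m_i-n_i+J_i$, the putative leading coefficient $\tfrac1{m_i}\Delta_i$ would sit at an $x$-power strictly higher than anything on the right, so it must vanish: $\Delta_i\equiv 0$. (The hypotheses $a_i>0$, $b_i>0$ guarantee no degeneracy in the exponent bookkeeping and that these are genuinely the non-proper/growing branches.)

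For the second part: $\Delta_i = a_i p_i\dot q_i - b_i \dot p_i q_i$, and up to the nonzero constant $A_iB_i$ one has $p_i = A_i\prod(\xi-\alpha_k)$, $q_i=B_i\prod(\xi-\beta_l)$, so the logarithmic-derivative form gives
\[
\frac{\Delta_i}{p_i q_i} \;=\; a_i\frac{\dot q_i}{q_i} - b_i\frac{\dot p_i}{p_i}\;=\;\sum_l \frac{a_i}{\xi-\beta_l} - \sum_k \frac{b_i}{\xi-\alpha_k}.
\]
This rational function vanishes identically iff $p_i^{b_i}$ and $q_i^{a_i}$ have the same logarithmic derivative, i.e. iff $p_i^{b_i} = C q_i^{a_i}$ for some $C\in\C^*$; and that forces every root $\alpha_k$ of $p_i$ to be a root of $q_i$ and conversely, so $p_i$ and $q_i$ share a zero. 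Conversely, if $\Delta_i\not\equiv 0$ then by the first part $\Delta_i \equiv -m_ij_i \in \C^*$ is a nonzero constant, so it has no zeros, and since any common zero of $p_i$ and $q_i$ would be a zero of $\Delta_i$, they can have none. I expect the only delicate point to be pinning down the exact constant and sign in the case $a_i+b_i=2m_i-n_i+J_i$, which requires being careful that the lower-order terms of $\varphi_i$ and of $\tilde P,\tilde Q$ do not contribute to the top $x$-coefficient; the exponent comparison itself and the logarithmic-derivative argument are routine.
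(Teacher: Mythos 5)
Your argument is correct and is essentially the paper's own proof: the author applies the chain rule to $f(t^{-m_i},\varphi_i(t^{-m_i},\xi))$ with $t=x^{-1/m_i}$ and compares leading terms, which is exactly your identity $\tilde P_x\tilde Q_\xi-\tilde P_\xi\tilde Q_x=(\partial_\xi\varphi_i)\,J(P,Q)(x,\varphi_i(x,\xi))$ rewritten in the variable $t$, while the second part (left ``to the readers'' in the paper) is precisely your logarithmic-derivative computation, including the observation that a nonconstant $\Delta_i$ is excluded because the case split $a_i+b_i\ge 2m_i-n_i+J_i$ is exhaustive. The only point to settle is the sign you already flagged: your own display gives $\tfrac1{m_i}\Delta_i=j_i$, hence $\Delta_i\equiv m_ij_i$ rather than $-m_ij_i$; the minus in the paper's version arises from the factor $\partial_t(t^{-m_i})=-m_it^{-m_i-1}$ in its $t$-parametrization (whose sign the paper's displayed identity does not carry consistently on both sides), and since only the dichotomy between $\Delta_i$ a nonzero constant and $\Delta_i\equiv0$ is used later, the discrepancy is harmless.
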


\begin{proof}
Since $a_i >0$ and $ b_i> 0,$  taking
differentiation of $Df(t^{-m_i},\varphi_i (t^{-m_i},\xi ))$,  we
have that
$$m_i j_it^{-J_i+n_i-2m_i-1} +\text{ higher terms in }t
=-\Delta_i (\xi) t^{-a_i-b_i-1}+ \text{ higher terms in }t.
$$
Comparing two sides of it we  can get the first conclusion. The
remains are left to the readers as an elementary exercise.
\end{proof}

\section{Proof of Theorem \ref{theo1}}

 Consider the associated sequence $\{\varphi_i\}_{i=1}^K$ of the pair $\psi
\prec \varphi$.  Since $\varphi$ is a dicritical series of $f$ and
$a_\psi=a_0 >0,b_\psi= b_0 >0$, we can see that $$\deg p_0
>0, \deg q_0>0.$$
Represent $(a_0,b_0)=(Md,Me)$ with $\gcd (d,e)=1$.
 Without loss of
generality we can assume that
$$\deg p_K >0, \ a_K=0 \text{ and } b_K\leq 0.$$
Then, from the construction of the sequence $\varphi_i$ it follows
that
\begin{equation}
\begin{cases}
p_i(c_i)=0 \text{ and } a_i > 0, & \ i=0, 1,\dots ,K-1\\
q_i(c_i)=0& \text{ if }   b_i >0 \end{cases}
\label{eq9}
\end{equation}
Then, by induction using Lemma \ref{lem2}, Lemma \ref{lem3} and
(\ref{eq9}) we can obtain without difficulty the following.

\begin{lemma}\label{lem4}
{\it For $i=0,1,\dots ,K-1$ we have
$$a_i>0,b_i>0,\eqno{\rm (a)}$$
$${a_i\over b_i}={\#S_i\over \# T_i}={d\over e}\eqno{\rm (b)}$$
and$${\# S_i^0\over \#T_i^0}={d\over e}, \bar p_i(\xi)^e=\bar
q_i(\xi)^d . \eqno{\rm (c)}$$}
\end{lemma}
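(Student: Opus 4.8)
The plan is to prove Lemma \ref{lem4} by induction on $i$, establishing (a), (b), (c) simultaneously. The base case $i=0$ is immediate: by hypothesis $a_0 = a_\psi = Md > 0$ and $b_0 = b_\psi = Me > 0$, which gives (a); for (b) and (c) at level $0$ I would use the fact that $\psi$ is not a singular series together with Lemma \ref{lem3} applied to $\varphi_0 = \psi$. Since $a_0, b_0 > 0$, Lemma \ref{lem3} tells us that $\Delta_0(\xi)$ is a nonzero constant $-m_0 j_0$ precisely when $a_0 + b_0 = 2m_0 - n_0 + J_0$, and is identically zero otherwise. In the latter case $p_0$ and $q_0$ share a zero and $p_0^{b_0} = C q_0^{a_0}$; comparing degrees gives $b_0 \deg p_0 = a_0 \deg q_0$, i.e. $\deg p_0 / \deg q_0 = a_0/b_0 = d/e$, so $\#S_0/\#T_0 = d/e$. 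One must also check the refined statement about $\bar p_0, \bar q_0$ and the $S_0^0, T_0^0$ counts — here the identity $p_0^{b_0} = C q_0^{a_0}$ forces the multiset of roots of $p_0$ (with multiplicity $e$) to coincide with that of $q_0$ (with multiplicity $d$), hence in particular $\#S_0^0 \cdot e = \#T_0^0 \cdot d$ and, after removing the $(\xi - c_0)$ factors appropriately, $\bar p_0^e = \bar q_0^d$. The case where $\Delta_0$ is a nonzero constant needs to be ruled out or handled: I expect that because $\varphi$ is dicritical and lies above $\psi$, property S2 forces $p_0$ or $q_0$ to have a nonzero root, and a short argument (using that $j_0$ is constant and the chain must eventually reach $a_K = 0$) shows $\Delta_0 \equiv 0$ is the only consistent possibility.

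For the inductive step, assume (a), (b), (c) hold at level $i-1$ with $i-1 < K-1$, so in particular $i \le K-1$. From Lemma \ref{lem2} we have $\deg p_i = \#S_i = \#S_{i-1}^0$ and $\deg q_i = \#T_i = \#T_{i-1}^0$, and by the inductive hypothesis (c) these are in ratio $d/e$. Also from Lemma \ref{lem2},
\begin{equation*}
\frac{a_i}{m_i} = \frac{a_{i-1}}{m_{i-1}} + \#S_{i-1}^0\Bigl(\frac{n_{i-1}}{m_{i-1}} - \frac{n_i}{m_i}\Bigr), \qquad \frac{b_i}{m_i} = \frac{b_{i-1}}{m_{i-1}} + \#T_{i-1}^0\Bigl(\frac{n_{i-1}}{m_{i-1}} - \frac{n_i}{m_i}\Bigr).
\end{equation*}
Since $\#S_{i-1}^0/\#T_{i-1}^0 = d/e$ and $a_{i-1}/b_{i-1} = d/e$, both $a_i/m_i$ and $b_i/m_i$ are the same positive multiple of a common quantity times $d$ and $e$ respectively, giving $a_i/b_i = d/e$; combined with (\ref{eq9}), which asserts $p_i(c_i) = 0$ and $a_i > 0$ for $i \le K-1$, this yields (a) and the $a_i/b_i$ part of (b) at level $i$. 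The equality $a_i/b_i = \#S_i/\#T_i$ then follows from reapplying Lemma \ref{lem3} at level $i$ (legitimate since we now know $a_i > 0$, $b_i > 0$): as before $\Delta_i \equiv 0$, hence $p_i^{b_i} = C q_i^{a_i}$, giving (b) fully and, by the same root-multiset argument as in the base case, (c).

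The main obstacle I anticipate is twofold. First, one must verify that $b_i > 0$ throughout $i < K$ and not merely $b_i \ge 0$ — the construction (\ref{eq9}) only guarantees $q_i(c_i) = 0$ conditionally on $b_i > 0$, so I need to confirm that the ratio identity $a_i/b_i = d/e$ propagates $b_i > 0$ from $a_i > 0$, which it does once $e > 0$ (true since $\gcd(d,e)=1$ and both are nonnegative with $b_0 = Me > 0$). Second, and more delicate, is justifying that $\Delta_i \equiv 0$ rather than a nonzero constant at each stage: if $\Delta_i$ were a nonzero constant, then $\deg(a_i p_i \dot q_i - b_i \dot p_i q_i) = 0$, which constrains $\deg p_i$ and $\deg q_i$ severely and would typically conflict with property S2 (that $p_{\varphi_i}$ or $q_{\varphi_i}$ has a nonzero root for $i < K$) or with the requirement that the chain terminates at a genuinely dicritical $\varphi$ with $\deg p_K > 0$. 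I would handle this by observing that $\Delta_i \equiv 0$ is forced because $j_\psi$ is a nonzero \emph{constant} and tracking how $J_i$ and the exponent $2m_i - n_i + J_i$ evolve under Lemma \ref{lem2}-type recursions shows the constant alternative is incompatible with reaching $a_K = 0$; alternatively, the cleaner route is to note that once $p_i$ and $q_i$ must share the root $c_i$ (from (\ref{eq9}), at least for $p_i$, and the structure forcing $q_i(c_i)=0$ too via the ratio), Lemma \ref{lem3}'s dichotomy immediately selects the $\Delta_i \equiv 0$ branch since a common zero is present. This common-zero observation is, I expect, the crux that makes the whole induction go through cleanly.
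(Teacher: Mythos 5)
Your proposal is correct and follows exactly the route the paper intends: the paper's own ``proof'' is only the one-line remark that the lemma follows ``by induction using Lemma \ref{lem2}, Lemma \ref{lem3} and (\ref{eq9})'', and your induction --- with the common zero $c_i$ of $p_i$ and $q_i$ supplied by (\ref{eq9}) forcing $\Delta_i\equiv 0$, hence $p_i^{b_i}=Cq_i^{a_i}$, and Lemma \ref{lem2} propagating the ratio $d/e$ to level $i+1$ --- is precisely that argument spelled out. The ``cleaner route'' you settle on at the end (common zero $\Rightarrow$ $\Delta_i\equiv 0$ branch) is indeed the intended mechanism, and it makes your earlier worries about ruling out the nonzero-constant alternative via S2 unnecessary.
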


Now, we are ready to complete the proof.

First note that $\deg p_\psi=\# S_0$ and $\deg q_\psi=\#T_0$.
Then, from Lemma \ref{lem4} (c) it follows that
$$ (\deg p_\psi, \deg q_\psi)=(Nd,Ne)$$ for $N=\gcd (\deg p_\psi, \deg q_\psi)\in
\N$. Thus, we get Conclusion (i).

Next, we will show $b_K=0$. Indeed, by Lemma \ref{lem2} (iii) and Lemma \ref{lem4}
(b-c) we have

\begin{eqnarray*}
\frac{b_K}{m_K}&=&\frac{b_{K-1}}{m_{K-1}}+\#T^0_{K-1}(\frac{n_{K-1}}{m_{K-1}}-\frac{n_K}{m_K})\\
&=&\frac{e}{d}[\frac{a_{K-1}}{m_{K-1}}+\#S^0_{K-1}(\frac{n_{K-1}}{m_{K-1}}\frac{n_K}{m_K})]\\
&=&\frac{e}{d}\frac{a_K}{m_K}\\
&=&0,
\end{eqnarray*}

\noindent as $a_K=0$. Thus, we get
$$a_K=b_K=0.$$

Now, we detect the form of polynomials $p_K(\xi)$ and $q_K(\xi))$.
Using Lemma \ref{lem2} (ii-iii) to compute the leading coefficients $A_K$
and $B_K$ we can get
$$A_K=A_0(\prod_{k\leq K-1}\bar p_k(c_k))
, B_K=B_0(\prod_{k\leq K-1}\bar q_k(c_k)).$$ Let $C$ be a
$d-$radical of $(\prod_{k\leq K-1}\bar p_k(c_k))$. Then, by Lemma
\ref{lem2} (ii) and Lemma \ref{lem4} (c) we have that
$$A_K=A_0C^d,B_K=B_0C^e.$$
Let  $D:=\gcd (\# S^0_{K-1},\# T^0_{K-1})$. Then, by Lemma \ref{lem4} (b-c)
we get $$\deg p_K=\# S^0_{K-1}=D d, \deg q_K=\# T^0_{K-1}=D e.$$
Thus,
\begin{align*}
p_K(\xi)=A_0C^d\xi^{ D d}+\dots\\ q_K (\xi)=B_0C^e \xi^{ D
e}+\dots\end{align*} This proves Conclusion (ii).  \hfill
$\square$

\section{Proof of Theorem \ref{theo2}}
Suppose $\varphi $ is a
dicritical series $\varphi $ of $f$ with $a_{\varphi}=0$ and
$b_{\varphi} <0$. Since $b_{\varphi} <0$, there is a horizontal series
$\psi$ of $Q$ such that $\psi\prec \varphi$. We will show that
$\psi$ is a singular series of $f$.

Observe that  $\varphi$ is a horizontal series of $P$ since
$a_{\varphi}=0$. Hence, $\deg p_\psi >0$, since $\psi \prec
\varphi$. Represent
\begin{align*}
P(x,\psi (x, \xi))&=p_\psi (\xi)x^{a_\psi\over m_\psi} +{\text{\rm
lower terms in }} x, \\
 Q(x, \psi (x, \xi))&=q_\psi (\xi)+{\mbox
{\rm lower terms in }} x,  \\
 J(P,Q)(x, \psi (x, \xi))&=j_\psi
(\xi)x^{J_\psi\over m_\psi} +{\text{\rm lower terms in }} x.
\end{align*}
Since $a_\psi >0$ and $ b_\psi=0$, taking differentiation of
$Df(t^{-m_\psi},\psi (t^{-m_\psi},\xi ))$ we have that
$$m_\psi j_\psi(\xi)t^{J_\psi+n_\psi-2m_\psi-1} +\text{ h.terms in }t
=-a_\psi p_\psi(\xi) \dot q_\psi(\xi) t^{-a_\psi-1}+ \text{
h.terms in }t.
$$
Comparing two sides of it we   get that
$$m_\psi j_\psi(\xi)=-a_\psi  p_\psi(\xi) \dot q_\psi(\xi).$$
As $\deg p_\psi >0$, we get $\deg j_\psi(\xi) >0$, i.e. $\psi$ is
a singular series of $f$.\hfill $\square$

\section{Last comment}
To conclude the paper we want to
note that instead of the polynomial maps $f=(P,Q)$ we may consider
 pairs $f=(P,Q)\in k((x))[y]^2$, where $k$ is an algebraically closed
field of zero characteristic and $k((x))$ is the ring of formal
Laurent series in variable $x^{-1}$ with finite positive power
terms. Then, in view of the Newton theorem the polynomial $P(y)$
and $Q(y)$ can be factorized into linear factors in $k((x))[y]$.
And the notions of {\it horizontal series}, {\it dicritical
series} and {\it singular series} can be introduced in an
analogous way. In this situation the statements of Theorem
\ref{theo1} and Theorem \ref{theo2} are still valid and can be
proved in the same way as in sections 2-5.

\section*{Acknowledgments} The author wishes to thank
Prof. A. van den Essen for many valuable suggestions and useful
discussions.

\end{document}